\newtheorem{theorem}{Theorem}
\newtheorem{lemma}[theorem]{Lemma}
\DeclareMathOperator{\crg}{cr}
\DeclareMathOperator{\lcr}{\overline{lcr}}
\DeclareMathOperator{\lc}{lcr}
\DeclareMathOperator{\rcr}{\overline{cr}}
\definecolor{olivegreen}{rgb}{0.05,0.7,0.1}
\def\tred#1{\textcolor{black}{#1}}
\begin{document}

\title{The rectilinear local crossing number of $K_{n}$}
\author{Bernardo M.~\'{A}brego\\{\small California State University, Northridge}\\{\small bernardo.abrego@csun.edu }
\and Silvia Fern\'{a}ndez-Merchant \thanks{Supported by NSF grant
DMS-1400653}\\{\small California State University,
Northridge}\\{\small silvia.fernandez@csun.edu }} \maketitle

\begin{abstract}
The \emph{local crossing number} of a drawing of a graph is the
largest number of crossings on any edge of the drawing. In a
\emph{rectilinear drawing} of a graph, the vertices are points in
the plane in general position and the edges are straight-line
segments.  The \emph{rectilinear local crossing number} of the
complete graph $ K_n $, denoted by $\lcr(K_n)$, is the minimum local
crossing number over all rectilinear drawings of $ K_n $.

We determine $\lcr(K_n)$. More precisely, for every $n
\notin \{8, 14 \}, $
\[
\lcr(K_n)=\left\lceil \frac{1}{2} \left( n-3-\left\lceil
\frac{n-3}{3} \right\rceil \right) \left\lceil \frac{n-3}{3}
\right\rceil \right\rceil,
\]
$\lcr(K_8)=4$, and $\lcr(K_{14})=15$. \bigskip

\noindent \textit{Keywords:}  local crossing number, complete graph,
geometric drawing, rectilinear drawing, rectilinear crossing number,
balanced partition.

\noindent \textit{MSC2010:} 05C10, 05C62, 52C10, 68R10.
\end{abstract}

\section{Introduction}
We are concerned with rectilinear drawings of the complete graph
$K_n$. That is, drawings where each of the $n$ vertices is a point
in the plane, and every edge is drawn as a straight-line segment
among these $n$ vertices. We assume that the set of points is in
general position; that is, there are no three collinear points.

According to Guy et al. \cite{GJS68} and Kainen \cite{Kai73}, Ringel
defined the local crossing number of a graph as follows: In a
\emph{drawing} of a graph, each vertex is represented by a point and
each edge is represented by a simple continuous arc not passing
through any vertex other than its endpoints. The \emph{local
crossing number} of a drawing $D$ of a graph $G$, denoted $\lc (D)$,
is the largest number of crossings on any edge of $D$. The
\emph{local crossing number} of $G$, denoted $\lc(G)$, is the
minimum of $\lc(D)$ over all drawings $D$ of $G$. Other authors
(like Thomassen \cite{Tho88}) have called it the \emph{cross-index},
but we follow Schaefer \cite{Sch14} who strongly encourages the use
of local crossing number. The equivalent definition for rectilinear
drawings is the \emph{rectilinear local crossing number} of $G$,
denoted $\lcr(G)$, as the minimum of $\lc(D)$ over all rectilinear
drawings $D$ of $G$.

Recently, Lara \cite{Lar15} posed the problem of determining $\lcr
(K_n)$. Together with Rubio-Montiel, and Zaragoza, they claimed that
\[
\frac{1}{18}n^2+\Theta(n) \le \lcr(K_n) \le
\frac{1}{9}n^2+\Theta(n),
\]
where the upper bound appears in \cite{LRZ15}.

In the 1960s, Guy, Jenkyns, and Schaer \cite{GJS68} worked on the
problem of determining the local crossing number of $K_n$ drawn on
the surface of a torus. They determined the exact values when $n \le
9$, and they provided asymptotic estimates based on their bounds of
the toroidal crossing number. In particular, they used an argument
in the torus equivalent to the following:  The sum of the number of
crossings of every edge over all edges of a graph $G$ counts
precisely twice the number of crossings of $G$. It follows that
\[
\lcr(G) \ge \frac{2 \rcr (G)}{\tbinom{n}{2}},
\]
where $\rcr (K_n)$ denotes the \emph{rectilinear crossing number} of
$K_n$; that is the smallest number of crossings among all
rectilinear drawings of $K_n$. The current best lower bound by
\'Abrego et al. \cite{ACFLS12}, $\rcr (K_n) \ge \frac{277}{729}
\tbinom{n}{4}+\Theta(n^3)$, yields
\[
\lcr(K_n) \ge \frac{277}{4374} n^2+\Theta(n)
>\frac{1}{15.8}n^2+\Theta(n),
\]
which is already an improvement over the lower bound by Lara et al.
Note that this approach also gives a bound for the local crossing
number of $ K_n $ when the drawings are not necessarily rectilinear.
The current best bound by De Klerk et al. \cite{DeKPS07}$, \crg
(K_n)\geq (0.8594/64)n^4+\Theta (n^3) $, yields
\[
\lc(K_n) \ge \frac{0.8594}{16} n^2+\Theta(n)
>\frac{1}{18.62}n^2+\Theta(n).
\]

In this paper, we determine $\lcr(K_n)$ precisely. Our main result
is the following.

\begin{theorem}\label{th:Main}
If $n$ is a positive integer, then
\begin{align*}
\lcr(K_n) &=\left\lceil \frac{1}{2} \left( n-3-\left\lceil
\frac{n-3}{3} \right\rceil \right) \left\lceil \frac{n-3}{3}
\right\rceil \right\rceil \mbox{if }n\notin \{8,14 \} \text{, that is,}\\
\lcr(K_n)&= \begin{cases} \frac{1}{9}(n-3)^2  &\mbox{if } n \equiv 0 \pmod3, \\
\frac{1}{9}(n-1)(n-4) &\mbox{if } n \equiv 1 \pmod3, \\
\frac{1}{9}(n-2)^2-\left\lfloor \frac{n-2}{6} \right\rfloor
&\mbox{if } n \equiv 2 \pmod3, n \notin \{8,14\}. \end{cases}
\end{align*}
In addition, $\lcr(K_8) =4$ and $\lcr(K_{14})=15$.
\end{theorem}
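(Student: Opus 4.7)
The plan is to prove matching upper and lower bounds on $\lcr(K_n)$.

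\textbf{Upper bound via construction.} Noting that the target value has the form $\tfrac{1}{2}(n-3-k)k$ with $k=\lceil(n-3)/3\rceil$, the natural approach is to exhibit a rectilinear drawing in which every edge is crossed at most this often. I would aim for a three-fold symmetric configuration: three clusters of $\lceil(n-3)/3\rceil$ nearly collinear points placed along the sides of a large triangle, together with three ``apex'' points at its corners, chosen so that for any edge the pairs contributing crossings come from at most two of the three clusters lying on opposite sides of the edge. Once the geometry is fixed, the verification reduces to a case analysis over the combinatorial type of the edge (within a cluster, between two clusters, apex-to-cluster, apex-to-apex), counting crossings directly in each class. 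The two exceptions $n\in\{8,14\}$ presumably require separate small constructions that beat the generic bound for these sporadic sizes.

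\textbf{Lower bound.} The averaging inequality $\lcr(K_n)\ge 2\rcr(K_n)/\binom{n}{2}$ combined with the best known bound on $\rcr(K_n)$ only yields roughly $n^2/15.8$, a constant factor weaker than the claimed $(n-3)^2/9$, so a finer argument is needed. For an edge $uv$ in a rectilinear drawing, let $a(uv)\le b(uv)$ count the points on the two sides of the line through $uv$, and recall that the crossings on $uv$ equal the number of pairs $(p,q)$ with $p,q$ on opposite sides of $uv$ and $u,v$ on opposite sides of $pq$. My plan is to locate, in every rectilinear drawing, an edge $uv$ whose level $a(uv)$ is close to $\lceil(n-3)/3\rceil$ and whose ``convex-position deficit'' (opposite-side pairs that fail to produce a crossing) is small. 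I would achieve this by combining classical Lovász-type lower bounds on the number of $(\le k)$-edges with a double count of crossings weighted by level, in order to force at least one edge at the appropriate level to realise a crossing count close to the trivial upper bound $a(uv)\cdot b(uv)$ at that level.

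\textbf{Main obstacle.} The central difficulty is closing the constant-factor gap left by simple averaging: this calls for exploiting structural properties of $k$-edges rather than just their count, and the argument has to be sharp enough to recover the precise ceilings in the formula, not merely the asymptotic constant. A secondary hurdle is the pair of genuine exceptions at $n\in\{8,14\}$, which break the otherwise uniform pattern; both the improved constructions realising $\lcr(K_8)=4$ and $\lcr(K_{14})=15$ and the matching lower bounds most likely require finite case analysis or computer-assisted search, and one must also verify that no further exceptional values occur.
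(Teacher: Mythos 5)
Your lower-bound strategy has a fatal quantitative flaw. An edge $uv$ at level $a(uv)\approx (n-3)/3$ with small convex-position deficit would carry roughly $a(uv)\cdot b(uv)\approx \tfrac{2}{9}n^2$ crossings, but the theorem asserts (and the optimal constructions achieve) a maximum of only about $\tfrac{1}{9}n^2$ crossings per edge. Hence in an extremal drawing \emph{no} edge at that level has small deficit, so the object you plan to ``locate'' by $(\le k)$-edge counting does not exist in precisely the drawings that matter; the approach cannot close the gap. The missing factor of $2$ is exactly where the paper's key idea lives: a separation lemma showing that either some chord $pq$ joining two convex-hull vertices splits the remaining points into parts differing by at most $(n-2)/3$ (in which case $pq$ alone already receives at least $\tfrac{2}{9}(n-2)^2$ crossings, more than enough), or else some path $p$--$r$--$q$ of length two, with $p$ and $q$ hull vertices, splits $P\setminus\{p,q,r\}$ into two sectors differing by at most $(n-3)/3$. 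Every segment joining the two sectors must cross this two-edge path, so \emph{one of the two edges} $pr$, $rq$ receives at least half of $\left\lceil \tfrac{n-3}{3}\right\rceil\left(n-3-\left\lceil \tfrac{n-3}{3}\right\rceil\right)$ crossings; this shared two-edge separator is the source of the $\tfrac12$ and of the exact ceilings in the formula. The hull condition on $p$ and $q$ is also essential, since it guarantees that the crossings with the separating line or path occur on the drawn segments rather than on their extensions.

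On the upper bound, your three-fold symmetric configuration is essentially the paper's first construction (three concave clusters of sizes $\lfloor n/3\rfloor$, $\lfloor (n+1)/3\rfloor$, $\lfloor (n+2)/3\rfloor$; note that your count $3\lceil (n-3)/3\rceil+3$ does not even equal $n$ unless $3\mid n$). A careful crossing count for that construction gives $(n_2-1)(n_1-1)$, which matches the lower bound only for $n\equiv 0,1\pmod 3$; for $n\equiv 2\pmod 3$ it yields $\tfrac19(n-2)^2$ and misses the required $-\lfloor (n-2)/6\rfloor$ term. The paper needs a second, non-symmetric construction (five clusters plus two isolated points) to handle $n\equiv 2\pmod 3$, $n\ge 20$, together with explicit drawings for $n=11,14,17$. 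Finally, you have the exceptional cases backwards: for $n=8$ and $n=14$ the general formula gives values $3$ and $14$ that are \emph{not attainable}, and the content of those cases is a strengthened lower bound ($\lcr(K_8)\ge 4$, $\lcr(K_{14})\ge 15$) established by exhaustive computer verification over order types, not a cleverer construction beating the generic one.
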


To prove this theorem, we employ a separation lemma for sets $P$
with $n$ points (Lemma \ref{lem:separation}). We show that there is
either an edge or a path of length 2 whose endpoints are vertices of
the convex hull of $P$ that separates the rest of the set into parts
that differ by at most $(n-2)/3$ or  $(n-3)/3$ points, respectively.
This lemma may have applications to other separation or crossing
problems. To match the lower bound obtained from Lemma
\ref{lem:separation}, we present a couple of constructions: A simple
one that works when $n \not\equiv 2 \pmod 3$ and a more elaborate
construction when $n \equiv 2 \pmod3$.

In Section \ref{sec:lower bound}, we present the lower bound needed
for the proof of Theorem 1. In Sections \ref{sec:firstconstr} and
\ref{sec:secondconstr} we present constructions that match the lower
bound, except for a few exceptional cases. In Section
\ref{sec:MainTheorem} we complete the proof of the Theorem, and in
Section \ref{sec:finalremarks} we present some remarks and open
problems. We also note that the remaining special case of
$\lcr(K_{14}) = 15$ was settled by Aichholzer \cite{A}.

\section{Lower Bound}\label{sec:lower bound}

Let $P$ be a set of points in the plane. For every pair of distinct
points $p$ and $q$ in the plane, denote by $H^{+}(pq)$ the set of
points of $P$ that are on the right side of the oriented line $pq$.
Similarly, define $H^{-}(pq)$ as the set of points of $P$ on the
left side of the oriented line $pq$. If $p$, $q$, and $r$ are three
points in the plane, let $S(prq)$ be the set of points of $P$ in the
open sector defined by the oriented angle $\angle prq$; that is the
sector obtained by rotating a ray counterclockwise around $r$ from
$rp$ to $rq$ (excluding both of the rays $rp$ and $rq$). Finally,
let $ \triangle prq $ denote the interior of triangle $ pqr $. We
first prove the following separation lemma.

\begin{lemma}\label{lem:separation}
If $P$ is a set of $n\ge 3$ points in the plane, then one of the
following occurs:
\begin{itemize}
\item[(a)] There are two vertices $p$ and $q$ of the convex hull of
$P$ such that
\[
\Bigl||H^{+}(pq)|-|H^{-}(pq)|\Bigr| \le \frac{n-2}{3}.
\]

\item[(b)] There are three points $p$, $q$, and $r$ in $P$, such that $p$
and $q$ are vertices of the convex hull of $P$ and
\[
\Bigl||S(prq)|-|S(qrp)|\Bigr| \le \tred{\left\lceil\frac{n-3}{3} \right\rceil}.
\]
\end{itemize}
\end{lemma}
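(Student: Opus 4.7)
The plan is to fix a convex hull vertex $p$ of $P$ and list the remaining points in counterclockwise angular order around $p$ as $q_1, q_2, \ldots, q_{n-1}$; this is well-defined because $p$ being on the hull forces the other points to lie in an open half-plane bounded by some line through $p$. For each $i$, the line $pq_i$ splits $P \setminus \{p, q_i\}$ into sets of sizes $i-1$ and $n-1-i$, so the balance of the hull-vertex pair $(p, q_i)$ equals $|n - 2i|$. If some convex hull vertex $q_i$ has its angular index $i$ in $[\lceil (n+1)/3 \rceil, \lfloor (2n-1)/3 \rfloor]$, then $|n - 2i| \le (n-2)/3$ and conclusion (a) is immediate with $(p, q_i)$.

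Otherwise, every convex hull vertex other than $p$ has $q$-index below $(n+1)/3$ or above $(2n-1)/3$, so the hull vertices split into a \emph{left cluster} with small $q$-indices and a \emph{right cluster} with large $q$-indices. In particular $r := q_{\lceil n/2 \rceil}$ is forced to be a non-hull interior point. Writing the hull in CCW order as $v_1 = p, v_2, \ldots, v_h$, let $v_a$ denote the last left-cluster hull vertex and $v_{a+1}$ the first right-cluster one. Since no hull vertex has $q$-index strictly between $\alpha_a$ and $\alpha_{a+1}$ (where $\alpha_j$ is the $q$-index of $v_j$), the vertices $v_a$ and $v_{a+1}$ must be adjacent on the hull, so $v_a v_{a+1}$ is a convex hull edge. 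The triangle $T := p v_a v_{a+1}$ then contains in its interior exactly the $N := \alpha_{a+1} - \alpha_a - 1$ non-hull points $q_i$ with $\alpha_a < i < \alpha_{a+1}$, and a short arithmetic check using the thresholds yields $N \ge \lceil (n-3)/3 \rceil$.

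For conclusion (b) I use the hull-vertex pair $(v_a, v_{a+1})$ together with a point $r' \in P$ to be chosen. The key geometric observation is that because $v_a v_{a+1}$ is a hull edge (no $P$-points beyond it), for any $r'$ inside $T$ the sector $S(v_a r' v_{a+1})$ on the hull-edge side consists precisely of $P$-points lying strictly inside the sub-triangle $r' v_a v_{a+1}$; call this count $c(r')$. It equals $N$ at $r' = p$ and decreases to $0$ as $r'$ approaches the edge $v_a v_{a+1}$. When $N \le \lfloor 2(n-3)/3 \rfloor$, the choice $r' = p$ immediately gives sector balance $|2N - (n-3)| \le (n-3)/3$ and (b) holds. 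When $N > \lfloor 2(n-3)/3 \rfloor$, I would sweep $r'$ through the $N$ non-hull points inside $T$ and argue that some $r'$ achieves $c(r') \in [\lceil (n-3)/3 \rceil, \lfloor 2(n-3)/3 \rfloor]$.

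The hard part is justifying this last step: although continuous motion of $r'$ from $p$ toward the hull edge forces $c$ to take every integer value between $N$ and $0$, we must choose $r'$ from the discrete set $P$. The main obstacle is therefore a discrete intermediate-value statement: choosing a convenient sweep order for the $N$ points of $P$ inside $T$ (for instance, by angle at $v_a$ or by perpendicular distance to $v_a v_{a+1}$) and showing that consecutive values of $c$ along the sweep differ by $\pm 1$ so that no jump can skip over the target window. Boundary issues (small $n$, values of $N$ at the extremes, and the integer rounding of the threshold fractions) will require separate care.
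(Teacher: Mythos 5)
The first half of your argument --- either (a) holds, or else two consecutive hull vertices $v_a,v_{a+1}$ together with $p$ span a triangle $T$ containing at least $(n-3)/3$ points of $P$ --- is exactly the paper's first step and is fine, as is the observation that $r'=p$ already yields (b) for the pair $(v_a,v_{a+1})$ when $N\le 2(n-3)/3$.

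The gap is the case $N>2(n-3)/3$, and it is not just a matter of ``separate care'': the discrete intermediate-value statement you are hoping for is false, and, worse, the entire strategy of using only the single pair $(v_a,v_{a+1})$ and only candidate points $r'$ inside $T$ cannot be made to work. The count $c(r')=|P\cap \triangle r'v_av_{a+1}|$ can skip arbitrarily long blocks of values under any sweep order. Concretely, let the convex hull be the triangle $p\,v_a\,v_{a+1}$ (so $N=n-3$ and (a) fails automatically) and place all $n-3$ interior points in a nearly horizontal row just below $p$, spread out widely enough that no row point lies inside the triangle spanned by another row point and the base $v_av_{a+1}$. Then $c(r')=0$ for every $r'\in P$ inside $T$ while $c(p)=n-3$, so no admissible $r'$ lands in the window $[\lceil(n-3)/3\rceil,\lfloor 2(n-3)/3\rfloor]$. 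The lemma of course still holds for this configuration, but only through a \emph{different} pair: taking $r$ to be the median point of the row, the path $v_a\,r\,p$ (pair $(p,v_a)$) splits the remaining points nearly evenly. This is precisely why the paper, after producing the triangle $q_1q_2q_3$, argues with all three pairs $(q_i,q_j)$ simultaneously and with all points $r\in P\setminus\{q_1,q_2,q_3\}$, not only those inside the triangle: assuming (b) fails, each such $r$ determines exactly one ``large'' sector, a pigeonhole assigns at least $(n-3)/3$ of these labels to one pair, and a minimality argument on $|S(q_1rq_2)|$ over those $r$ produces a contradiction. To repair your proof you would have to abandon the single-pair sweep and bring in the other two pairs as fall-backs in some such fashion.
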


\begin{figure}[htbp]
\begin{center}
\includegraphics[scale=1]{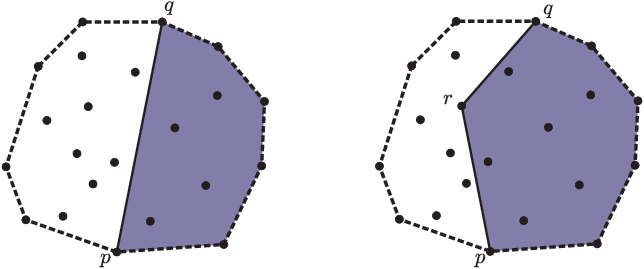}
\caption{The absolute difference of points between the shaded and
not shaded regions is at most $(n-2)/3$ (left) or $\tred{\lceil(n-3)/3\rceil}$
(right).} \label{fig:lemmaIllustr}
\end{center}
\end{figure}
\begin{proof}
The desired situation is illustrated in Figure
\ref{fig:lemmaIllustr}. Suppose that $p_1,p_2,\ldots,p_k$ are the
vertices of the convex hull of $P$ in counterclockwise order. We
first prove that either (a) occurs or else there are $ q_1 $, $ q_2
$, and $ q_3 $ vertices of the convex hull of $ P $ such that $
\triangle q_1q_2q_3 $ contains at least $(n-3)/3$ points of $ P $.

If $(n-2)/3 \le |H^{+}(p_1{p_i})| \le 2(n-2)/3$ for some $ 2\leq i
\leq k $, then
\[
\Bigl||H^{+}(p_1{p_i})|-|H^{-}(p_1{p_i})|\Bigr| \le
\frac{2(n-2)}{3}-\frac{(n-2)}{3} = \frac{n-2}{3},
\]
which is condition (a). So we may assume that for each $ 2\leq i
\leq k$ either $|H^{+}(p_1{p_i})|<(n-2)/3$ or
$|H^{+}(p_1{p_i})|>2(n-2)/3$. Because
\[
0=|H^{+}(p_1p_2)|\leq |H^{+}(p_1p_3)| \leq \ldots \leq
|H^{+}(p_1p_k)|=n-2,
\]
it follows that there is $j$ such that $2 < j \leq k$,
$|H^{+}(p_1p_{j-1})|<(n-2)/3$, and $|H^{+}(p_1p_j)|>2(n-2)/3$. Then
\begin{align*}
|P\cap \triangle
p_1p_{j-1}p_j|&=\left|H^{+}(p_1p_j)\right|-\left|H^{+}(p_1p_{j-1})\right|-1 \\
& \ge \left( \frac{2(n-2)}{3}+\frac{1}{3} \right) - \left(
\frac{n-2}{3}-\frac{1}{3} \right) -1 = \frac{n-3}{3}.
\end{align*}
Thus $\triangle p_1p_{j-1}p_j$ has at least $(n-3)/3$ points of $P$,
which gives the desired triple $ q_1,q_2,q_3$. We assume that the
triangle $ q_1q_2q_3 $ is positively oriented (counterclockwise).

Suppose by contradiction that
\[
\Bigl||S(q_irq_j)|-|S(q_jrq_i)|\Bigr|
> \tred{\left \lceil \frac{n-3}{3}\right \rceil},
\]
for every point $r\in P \setminus \{ q_1,q_2,q_3 \}$ and every $i,j
\in \{1,2,3 \}$ with $i \ne j$. It follows that, for every point $r\in P \setminus \{ q_1,q_2,q_3\}$, two of the following are less than $(n-3)/3$ and the other is greater than $2(n-3)/3$: $|S(q_1rq_2)|$, $|S(q_2rq_3)|$, and $|S(q_3rq_1)|$.

Let $f(r)$ be the pair $(q_i, q_j)$ corresponding to the sector greater than  $2(n-3)/3$. One of the pairs $(q_1, q_2)$, $(q_2,q_3)$,
or $(q_3,q_1)$ must be the image of at least one third of the points
in $ P \setminus \{ q_1,q_2,q_3\} $. Suppose without loss of
generality, that at least $(n-3)/3$ points $r$ have $f(r)=( q_1, q_2
)$. Among all those points $r$, let $r_0$ be one such that
$|S(q_1r_0q_2)|$ is minimum. Note that $r_0$ is in $ H^-(q_1q_2)$,
otherwise $ S(q_1r_0q_2)$ and $\triangle q_1q_2q_3 $ would be
disjoint (Figure \ref{fig:lastpart}a) and together would contain
more than $ 2(n-3)/3+(n-3)/3=n-3 $ points of $P \setminus
\{q_1,q_2,q_3 \}$. Let $p \in S(q_1r_0q_2)$  (Figure
\ref{fig:lastpart}bc). Then $S(q_1pq_2)$ is a proper subset of
$S(q_1r_0q_2)$, and by minimality of $r_0$ it follows that
$|S(q_1pq_2)| \le  2(n-3)/3$. Thus $f(p) \ne (q_1, q_2)$. Therefore
none of the points $p$ in $S(q_1r_0q_2)$ has $f(p)=(q_1,q_2)$; that
is, there are less thant $(n-4)-2(n-3)/3=(n-6)/3$ points $r$ such
that $f(r)=(q_1,q_2)$. This is a contradiction. Hence there is a
point $r$ and a pair $i,j \in \{1,2,3 \}$ with $i \ne j$, such that
\[
\Bigl||S(q_irq_j)|-|S(q_jrq_i)|\Bigr| \le \tred{\left\lceil \frac{n-3}{3} \right\rceil},
\]
and condition (b) is satisfied.
\begin{figure}[htbp]
\begin{center}
\includegraphics[scale=1]{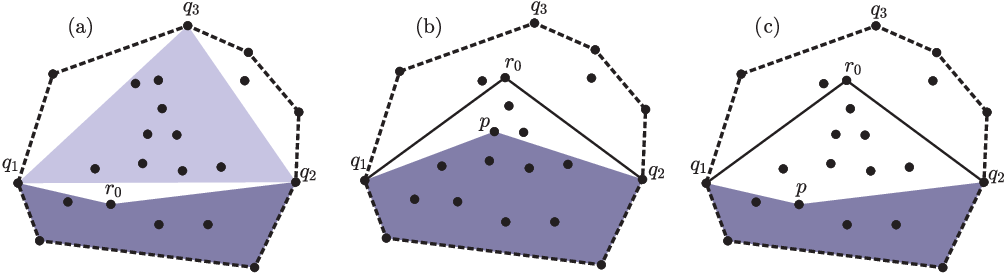}
\caption{(a) $\triangle q_1q_2q_3 \cup S(q_1r_0q_2)$ has more than
$n-3$ points of $P \setminus \{q_1,q_2,q_3 \}$. (bc) $S(q_1pq_2)$ is
a proper subset of $S(q_1r_0q_2)$.} \label{fig:lastpart}
\end{center}
\end{figure}
\end{proof}

Now we prove the lower bound.

\begin{theorem}\label{th:lowerbound}
If $n\ge 3$, then
\[
\lcr(K_n) \ge \begin{cases}  \frac{1}{9}(n-3)^2  &\mbox{if } n \equiv 0 \\
\frac{1}{9}(n-1)(n-4) & \mbox{if } n \equiv 1 \\
\frac{1}{9}(n-2)^2-\lfloor \frac{n-2}{6} \rfloor & \mbox{if } n
\equiv 2.
\end{cases} \pmod{3}
\]
\end{theorem}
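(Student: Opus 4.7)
The plan is to take any rectilinear drawing $D$ of $K_n$, apply Lemma~\ref{lem:separation} to its vertex set $P$, and exhibit an edge of $D$ that is crossed at least the claimed number of times. Both cases of the lemma rest on a single geometric principle: a convex hull vertex of $P$ cannot be written as a strict convex combination of other points of $P$.

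In case (a) I focus on the edge $pq$ and claim that a pair $u,v\in P\setminus\{p,q\}$ produces a crossing of $uv$ with $pq$ if and only if $u$ and $v$ lie on opposite sides of the line $pq$. The ``only if'' direction is immediate. For ``if'', the four points $\{p,q,u,v\}$ are in convex position: $p$ and $q$ are hull vertices of $P$, so neither lies in the convex hull of any three other points of $P$; meanwhile $u\in H^{+}(pq)$ and $v\in H^{-}(pq)$ are strictly separated by the line $pq$, which rules out $u\in\triangle pqv$ and $v\in\triangle pqu$. In the resulting convex quadrilateral, $pq$ and $uv$ are the two diagonals, which necessarily cross. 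Thus the number of crossings on $pq$ equals exactly $|H^{+}(pq)|\cdot|H^{-}(pq)|$, and with $|H^{+}(pq)|+|H^{-}(pq)|=n-2$ and $\bigl||H^{+}(pq)|-|H^{-}(pq)|\bigr|\le (n-2)/3$ this product already exceeds the claimed bound in every residue class.

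In case (b) I use the two edges $rp$ and $rq$ together with a pigeonhole argument. The sub-claim is that every edge $uv$ with $u\in S(prq)$ and $v\in S(qrp)$ crosses exactly one of $rp$ or $rq$. Because $u$ and $v$ lie in distinct open sectors at $r$ bounded by the rays from $r$ through $p$ and $q$, the segment $uv$ meets exactly one of these two rays at a single point $x$. If this ray is ray $rp$, I must show that $x$ lies on the segment $rp$ rather than on its extension beyond $p$: otherwise, writing $x=\lambda u+(1-\lambda)v$ with $\lambda\in(0,1)$ and $p=\mu r+(1-\mu)x$ with $\mu\in(0,1)$ would present $p$ as a strict convex combination of $r,u,v\in P$, contradicting that $p$ is a hull vertex. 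The symmetric argument handles ray $rq$. Hence each of the $s_1 s_2$ edges between the two sectors (where $s_1=|S(prq)|$, $s_2=|S(qrp)|$, $s_1+s_2=n-3$) contributes a crossing to exactly one of the edges $rp$ or $rq$, so one of these two edges is crossed at least $\lceil s_1 s_2/2\rceil$ times.

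It then remains to perform a routine numerical optimization. Case (a) is never tight, so the binding bound comes from case (b): minimize $\lceil s_1 s_2/2\rceil$ over nonnegative integers $s_1,s_2$ with $s_1+s_2=n-3$, $|s_1-s_2|\le (n-3)/3$, and $|s_1-s_2|\equiv n-3\pmod{2}$. Writing $n=3k+r$ with $r\in\{0,1,2\}$, identifying the largest admissible value of $|s_1-s_2|$ in each case, and substituting, one verifies that $\lceil s_1 s_2/2\rceil$ evaluates to $(n-3)^2/9$, $(n-1)(n-4)/9$, and $(n-2)^2/9-\lfloor (n-2)/6\rfloor$ respectively. The main bookkeeping obstacle is the residue class $n\equiv 2\pmod{3}$, where the $-\lfloor(n-2)/6\rfloor$ correction term arises precisely from the ceiling in the pigeonhole step and must be tracked through the parity analysis.
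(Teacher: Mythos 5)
Your proposal is correct and follows essentially the same route as the paper: apply Lemma~\ref{lem:separation}, observe that in case (a) the edge $pq$ receives $|H^{+}(pq)|\cdot|H^{-}(pq)|$ crossings (more than enough), and in case (b) pigeonhole the $s_1s_2$ crossings over the two edges $pr$ and $rq$, then carry out the same ceiling arithmetic. The additional justification you supply for why the crossings actually land on the segments (via the hull-vertex property of $p$ and $q$) is a detail the paper leaves implicit, but it does not alter the argument.
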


\begin{proof}
The result is trivial for $ n=3 $ and 4. Let $n\ge 5$ and consider a
set $P$ of $n$ points in the plane. We use the previous lemma. First
suppose that there are two vertices $p$ and $q$ of the convex hull
of $P$ such that
\[
\Bigl||H^{+}(pq)|-|H^{-}(pq)|\Bigr| \le {\frac{n-2}{3}}.
\]
Every edge $xy$ with $x\in H^{+}(pq)$ and $y \in H^{-}(pq)$
intersects the edge $pq$. Thus $pq$ is crossed at least
$|H^{+}(pq)|\cdot|H^{-}(pq)|$ times. Because the absolute difference
of the factors is at most $(n-2)/3$ and their sum is $ n-2 $, it
follows that for $ n\geq 5 $
\[
|H^{+}(pq)|\cdot|H^{-}(pq)| \ge  \left\lceil \frac{n-2}{3}
\right\rceil \left( n-2-\left\lceil \frac{n-2}{3} \right\rceil
\right) \ge \frac{2}{9}(n-2)^2,
\]
which is larger than the desired bound for any congruence class.

Now, suppose that there are three points $p$, $q$,  and $r$ in $P$
such that $p$ and $q$ are vertices of the convex hull of $P$ and
\[
\Bigl||S(prq)|-|S(qrp)|\Bigr| \le \tred{\left\lceil \frac{n-3}{3} \right\rceil}.
\]
Every edge $xy$ with $x\in S(prq)$ and $y \in S(qrp)$ intersects the
path $pr \cup rq$. Thus the union of the segments $pr$ and $rq$ is
crossed at least $|S(prq)|\cdot|S(qrp)|$ times. Because the absolute
difference of the factors is at most \tred{$\lceil (n-3)/3 \rceil$} and their sum is $
n-3 $, it follows that
\[
|S(prq)|\cdot|S(qrp)| \ge \left\lceil \frac{n-3}{3} \right\rceil
\left( n-3-\left\lceil \frac{n-3}{3} \right\rceil \right).
\]
Therefore one of $pr$ or $rq$ must be crossed at least $\lceil
\frac{1}{2}\lceil {(n-3)}/{3} \rceil ( n-3-\lceil {(n-3)}/{3} \rceil
) \rceil$ times. Hence
\[
\lcr(K_n) \ge \begin{cases}  \frac{1}{9}(n-3)^2  &\mbox{if } n \equiv 0 \\
\frac{1}{9}(n-1)(n-4) & \mbox{if } n \equiv 1 \\
\left\lceil \frac{1}{18}(n-2)(2n-7) \right\rceil =
\frac{1}{9}(n-2)^2-\lfloor \frac{n-2}{6} \rfloor& \mbox{if } n
\equiv 2.
\end{cases} \pmod{3}\qedhere
\]
\end{proof}

\section{The First Construction}\label{sec:firstconstr}

To complete the proof of Theorem \ref{th:Main}, we present
constructions that match the lower bound. The first construction was
originally presented by Lara et al. \cite{LRZ15}, it is based on the
currently best known constructions for the rectilinear crossing
number of $K_n$ by \'Abrego et al. \cite{AF05,ACFLS}  and the
conjectured structure of the optimal sets (see \cite{ACFLS}). Lara
et al. \cite{LRZ15} asymptotically calculated $\lcr (D)$ for the
construction that follows; however, for the purpose of obtaining an
exact formula, we have to carefully calculate the exact local
crossing number of these drawings. Theorem \ref{th:1stConstr} is
tight  for $ n\not\equiv 2 \pmod 3 $. In the next section, we
present a slightly better construction that matches the lower bound
for the class $ n\equiv 2 \pmod 3 $.

\begin{theorem}\label{th:1stConstr} If $n\ge 3$, then
\[
\lcr(K_n) \le \begin{cases}  \frac{1}{9}(n-3)^2  &\mbox{if } n \equiv 0 \\
\frac{1}{9}(n-1)(n-4) & \mbox{if } n \equiv 1 \\
\frac{1}{9}(n-2)^2 & \mbox{if } n \equiv 2.
\end{cases} \pmod{3}
\]
\end{theorem}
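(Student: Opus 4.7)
The plan is to exhibit the three-cluster construction from \cite{LRZ15} (inspired by the rectilinear crossing number constructions of \cite{AF05,ACFLS}) and to count the maximum number of crossings on a single edge. Partition the $n$ vertices into three clusters $A$, $B$, $C$ of sizes $a+b+c=n$ as balanced as possible: $(n/3,n/3,n/3)$ when $n\equiv 0 \pmod 3$, $((n-1)/3,(n-1)/3,(n+2)/3)$ when $n\equiv 1 \pmod 3$, and $((n-2)/3,(n+1)/3,(n+1)/3)$ when $n\equiv 2 \pmod 3$. Place each cluster on a short concave arc near one vertex of a large equilateral triangle, with each arc opening toward the centroid; after a generic perturbation the $n$ points are in general position and the drawing is rectilinear.

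The main task is to bound the crossings on each edge $e$. Classify $e$ by which cluster pair it joins. An inter-cluster edge $e=uv$ with $u\in A$, $v\in B$ is crossed by other $A$--$B$ edges (controlled by how many points of $A$ and of $B$ lie on each side of the line $uv$) together with some $A$--$C$ and $B$--$C$ edges; an intra-$A$ edge $a_ia_j$ is crossed primarily by those $B$--$C$ edges whose supporting segments intersect the chord $a_ia_j$. For each edge type I would parameterize the endpoints by their positions along the arcs and express the number of crossings as a quadratic in these indices, using that in a cluster of size $k$ arranged on a concave arc the $i$-th point splits the arc into pieces of sizes $i-1$ and $k-i$. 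Because all three clusters sit near distinct vertices of the enclosing triangle, products involving two clusters simplify dramatically: for example, the contribution of $B$--$C$ edges to the crossings on an intra-$A$ edge is essentially a product of the form $(\text{points of }B\text{ on one side})\cdot(\text{points of }C\text{ on the other side})$.

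The main obstacle is to maximize these quadratics over all edge types and all index choices, and to check that the worst case matches the stated closed form in each residue class. The intuition is that the dominant edge is either an intra-cluster edge between two extremal points of the largest cluster (which is crossed by nearly every $B$--$C$ pair lying between the corresponding supporting lines) or an inter-cluster edge joining two ``middle'' points of two different clusters. I would therefore reduce the verification to a short list of candidate edge types per residue class, compute their crossing counts explicitly, and observe that the maxima equal $\tfrac{1}{9}(n-3)^2$ for $n\equiv 0$, $\tfrac{1}{9}(n-1)(n-4)$ for $n\equiv 1$, and $\tfrac{1}{9}(n-2)^2$ for $n\equiv 2 \pmod 3$. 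The symmetry among the three clusters reduces the case analysis by a factor of three, and the near-tightness of the lower bound from Theorem \ref{th:lowerbound} confirms that no accounting can be off by more than the slack visible in the $n\equiv 2$ case, which is handled by the separate construction in Section \ref{sec:secondconstr}.
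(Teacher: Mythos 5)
Your overall plan---three nearly balanced clusters of sizes $\lfloor n/3\rfloor$, $\lfloor (n+1)/3\rfloor$, $\lfloor (n+2)/3\rfloor$ placed on small arcs near the vertices of a triangle, followed by a case analysis of the maximum number of crossings per edge type---is exactly the construction the paper uses, and your cluster sizes and target values per residue class are correct. However, there is a genuine gap: your description of \emph{which} edges cross which is wrong in a way that would break the bound. You assert that an intra-$A$ edge is ``crossed primarily by those $B$--$C$ edges whose supporting segments intersect the chord,'' with contribution roughly $|B|\cdot|C|\approx n^2/9$. In fact, when the clusters are small and sit near distinct vertices of a large triangle, a segment joining a point of $B$ to a point of $C$ stays inside the convex hull of $B\cup C$ and never comes near $A$, so \emph{no} $B$--$C$ edge crosses an intra-$A$ edge. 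If your claimed crossings were real, the count on such an edge would already exceed $\tfrac19(n-3)^2$, so the construction as you analyze it would not prove the theorem. The correct accounting is that a chord of $A$ with $a$ points of $A$ on its far side is crossed only by edges from those $a$ points to the remaining points of $A$ and to exactly \emph{one} of the other two clusters, giving $a(n_{i-1}+n_i-a-2)$ crossings; similarly an $A$--$B$ edge is crossed only by other $A$--$B$ edges and by intra-$B$ edges, giving $b(n_{i+1}-1-b)+b(n_i-1-a)+a(n_{i+1}-1-b)$.

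This points to the second missing ingredient: the orientation of the arcs is not a free choice, and ``opening toward the centroid'' is not the right prescription. The paper takes each arc to be almost collinear and bulging \emph{perpendicular} to the direction of the centroid (e.g.\ through $(1,0)$, $(3,0)$, $(2,\varepsilon)$), precisely so that every secant line of one arc separates the other two clusters. This is what forces the ``inside'' points of a chord of $C_i$ to see only $C_{i-1}$ across that chord, and what kills the $C_{i-1}$--$C_{i+1}$ contributions. Finally, your proposal defers the actual maximization (``I would parameterize\ldots and observe that the maxima equal\ldots''); the content of the proof is exactly those two explicit quadratics and the verification that their maxima are $(n_2-2)n_1$ and $(n_{i+1}-1)(n_i-1)\le (n_2-1)(n_1-1)$, the latter giving the stated formulas. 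As written, the proposal would need the crossing classification corrected and the optimization carried out before it constitutes a proof.
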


\begin{proof}

Consider an arc $C_0$ of circle passing through the points with
coordinates $(1,0)$, $(3,0)$, and $(2,\varepsilon)$, where
$\varepsilon$ is a very small positive real. Let $C_1$ and $C_2$ be
the rotations of $C_0$ with center at the origin and angles $2\pi/3$
and $4\pi/3$, respectively. We set $\varepsilon$ small enough so
that any secant line to the arc $C_0$ separates $C_1$ from $C_2$.

Thus, if $x_0,y_0 \in C_0$, $x_1 \in C_1$, and $x_2\in C_2$, then
the segments $x_0x_1$ and $y_0x_2$ do not cross, and the same occurs
for the corresponding rotations.

The point set $P$ consists of $n_0=\lfloor n/3 \rfloor$ points in
$C_0$, $n_1=\lfloor (n+1)/3 \rfloor$ points in $C_1$, and
$n_2=\lfloor (n+2)/3 \rfloor$ points in $C_2$.

Now we determine $\lcr(P)$. First suppose that $xy$ is an edge with
both endpoints in $C_i$. Moreover, suppose that there are $a$ points
of $P$ in the arc $C_i$ between $x$ and $y$ (see Figure
\ref{fig:construction}(a)). The edges obtained from connecting each
of these $a$ points to the remaining $n_i-a-2$ points on $C_i$, or
to any of the points in $C_{i-1}$ (where the indices are taken
modulo 3) all cross $xy$. By convexity of $C_i$ none of the edges
from $C_i$ to $C_{i+1}$ crosses $xy$, the same is true about edges
from $C_{i-1}$ to $C_{i+1}$. Thus $xy$ is crossed exactly
$a(n_{i-1}+n_i-a-2)$ times. The maximum occurs when $a=n_i-2$ and it
equals $(n_i-2)n_{i-1}$ which is at most $(n_2-2)n_1$.
\begin{figure}[htbp]
\begin{center}
\includegraphics[scale=1]{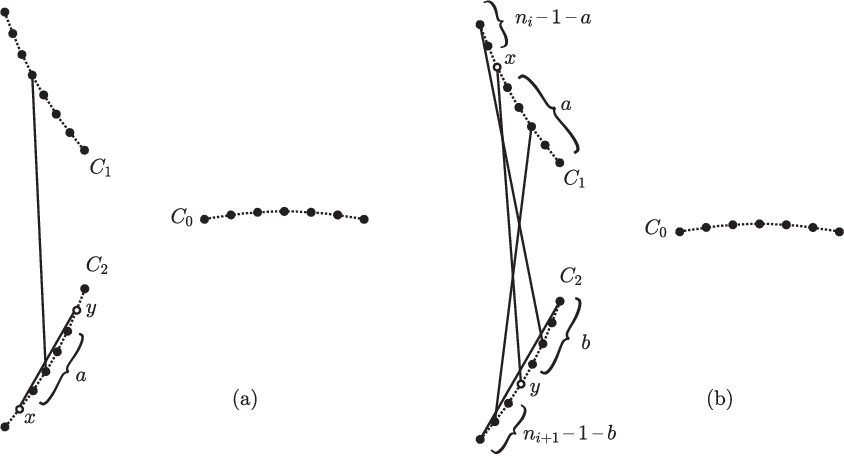}
\caption{The number of crossings of edges in the same $C_i$ or
between $C_i$ and $C_{i+1}$} \label{fig:construction}
\end{center}
\end{figure}

Now, suppose that $xy$ is an edge with $x\in C_i$ and $y \in
C_{i+1}$. Moreover, suppose that there are $a$ points of $P$ before
$x$ in $C_i$ and $n_i-1-a$ after $x$, and $b$ points of $P$ before
$y$ in $C_{i+1}$ and $n_{i+1}-1-b$ after $y$, as shown in Figure
\ref{fig:construction}(b). Each edge $uv$ with $u$ before $x$ in
$C_i$ and $v$ after $y$ in $C_{i+1}$ crosses $xy$. The same occurs
for the edges $uv$ with $u$ after $x$ and $v$ before $y$. In
addition, if $uv$ is an edge in $C_{i+1}$ with $u$ before $y$ and
$v$ after $y$, then $uv$ crosses $xy$. Edges with both endpoints in
$C_{i}$ or both in $C_{i-1}$ do not cross $xy$. The same is true for
edges with one endpoint in $C_{i-1}$ and the other in $C_i$ or
$C_{i+1}$. Thus $xy$ is crossed exactly
\[
b(n_{i+1}-1-b)+b(n_i-1-a)+a(n_{i+1}-1-b)
\]
times. This expression factors as
\[
(n_{i+1}-1)(n_i-1)-a(n_i-1-a)-(a+b+1-n_{i+1})(a+b+1-n_{i}).
\]
Because $0 \le a \le n_i-1$, the second term is nonpositive. The
third term is nonpositive because the two factors are either equal
or differ by one. Therefore, the maximum number of crossings of $xy$
is $(n_{i+1}-1)(n_i-1)$ and it occurs when $a=0$ and $b=n_{i+1}-1$
or $b=n_i-1=n_{i+1}-2$ (only if $n_i < n_{i+1}$); or when $a=n_i-1$
and $b=0$ or $b=n_{i+1}-n_i=1$ (only if $n_i < n_{i+1}$). In any
case, $(n_{i+1}-1)(n_i-1)\le (n_2-1)(n_1-1)$, and comparing to the
first case, $(n_2-2)n_1 \le (n_2-1)(n_1-1)$. Therefore,
\[
\lcr(P)=(n_2-1)(n_1-1)=\begin{cases}  \frac{1}{9}(n-3)^2  &\mbox{if } n \equiv 0 \\
\frac{1}{9}(n-1)(n-4) & \mbox{if } n \equiv 1 \\
\frac{1}{9}(n-2)^2 & \mbox{if } n \equiv 2.
\end{cases} \pmod{3} \qedhere
\]
\end{proof}

\section{The Second Construction}\label{sec:secondconstr}
In this section, we present a construction that matches the lower
bound of $\lcr(K_n)$ when $n \equiv 2 \pmod3 $. Set $n =3k+8$ for
some positive integer $k \ge 4$.

\begin{theorem}\label{th:secondconstr}
If $n=3k+8$ with $k\ge 4$, then
\[
\lcr(K_n) \le \frac{1}{9}(n-2)^2-\left\lfloor \frac{n-2}{6}
\right\rfloor =k^2+4k+3-\left\lfloor \frac{k}{2} \right\rfloor.
\]
\end{theorem}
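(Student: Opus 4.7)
The plan is to exhibit a rectilinear drawing of $K_n$ with $n = 3k+8$ whose local crossing number is at most $k^2 + 4k + 3 - \lfloor k/2 \rfloor$. The first construction of Theorem \ref{th:1stConstr}, applied in the residue class $n \equiv 2 \pmod 3$, achieves $(n_2-1)(n_1-1) = ((n-2)/3)^2 = (k+2)^2 = k^2 + 4k + 4$, so the task is to shave off exactly $1 + \lfloor k/2 \rfloor$ crossings from the worst edge.

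My approach is to modify the three-arc construction of the previous section by a careful perturbation that disrupts the edges realising the worst-case count without pushing the count on any other edge above the new target. Concretely, I would begin with three concave arcs $C_0, C_1, C_2$ arranged with approximate three-fold rotational symmetry and containing $n$ points in total. The analysis of Theorem \ref{th:1stConstr} identifies two candidates for the worst edge: a long chord inside an arc, whose crossing count is $(n_i-2)n_{i-1}$, and a transverse edge between consecutive arcs, whose crossing count is $(n_i-1)(n_{i+1}-1)$. I would then redistribute on the order of $\lfloor k/2 \rfloor$ points, either by refining the arc sizes or by relocating a few points off the arcs entirely into a carefully chosen interior region, so that the maximum of these two expressions drops by the required amount.

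Once the drawing is specified, the next step is a case analysis modelled on the proof of Theorem \ref{th:1stConstr}. I would parameterise every edge by the positions of its endpoints on the arcs (and in the perturbed region), write its crossing count as an explicit function of those positions, and optimise. The floor $\lfloor k/2 \rfloor$ in the target bound strongly suggests that the count will split according to the parity of $k$, and the analysis must be carried out separately for even and odd $k$.

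The main obstacle will be bounding the crossings on the edges incident to the perturbed points: moving points to reduce crossings on one family of edges tends to create new near-worst edges elsewhere, and the whole redistribution only works if these secondary edges still respect the lowered bound. The hypothesis $k \ge 4$ is presumably needed to guarantee enough room on each arc for the perturbation to be executed without any auxiliary edge exceeding the target; the remaining exceptional cases $n \in \{8, 14\}$ are excluded from Theorem \ref{th:Main} and handled separately.
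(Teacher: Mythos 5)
There is a genuine gap here: your proposal identifies the correct target (shaving $1+\lfloor k/2\rfloor$ crossings off the $(k+2)^2$ achieved by the three-arc construction) and the right general strategy (split off roughly $\lfloor k/2\rfloor$ points from the arcs and relocate them), but it never specifies the construction, and the specification is the entire content of the theorem. Saying you would ``redistribute on the order of $\lfloor k/2 \rfloor$ points, either by refining the arc sizes or by relocating a few points off the arcs entirely into a carefully chosen interior region'' leaves open exactly the question the proof must answer: where do those points go? The paper's answer is a concrete seven-part configuration: one arc $A$ of $k+2$ points, two arcs $B_1,B_2$ of $k+2-\lfloor k/2\rfloor$ points placed far from the center, two small arcs $C_1,C_2$ of $\lfloor k/2\rfloor$ points placed close to the center (so that, roughly, the old arcs $C_1$ and $C_2$ of the first construction are each split into a far piece and a near piece), and two isolated points $x_1,x_2$, the whole set symmetric about the $y$-axis. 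The $\lfloor k/2\rfloor$ in the bound comes directly from the cardinality of $C_1$ and $C_2$, not from a parity split in the optimization as you predict; the analysis is uniform in $k$.

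Moreover, you correctly flag that ``the main obstacle will be bounding the crossings on the edges incident to the perturbed points,'' but that obstacle is precisely what remains undone. The verification requires enumerating every pair of parts (fifteen cases after symmetry in the paper), writing the crossing count of an edge between them as an explicit quadratic in the positions of its endpoints within their arcs, and maximizing each one; several cases (e.g.\ edges from $B_1$ to $C_2$, and from $A$ to $C_1$) attain the bound $k^2+4k+3-\lfloor k/2\rfloor$ exactly, so there is no slack, and an unlucky placement of the small clusters would immediately overshoot. The hypothesis $k\ge 4$ is likewise used in specific numerical comparisons (e.g.\ bounding $k^2+4k+4-\tfrac34\lfloor k/2\rfloor^2$ by the target), not merely to ``guarantee enough room.'' As written, your argument is a plausible research plan rather than a proof: without exhibiting the point set and carrying out the case analysis, the claimed upper bound is not established.
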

\begin{figure}[h]
\begin{center}
\includegraphics[scale=.85]{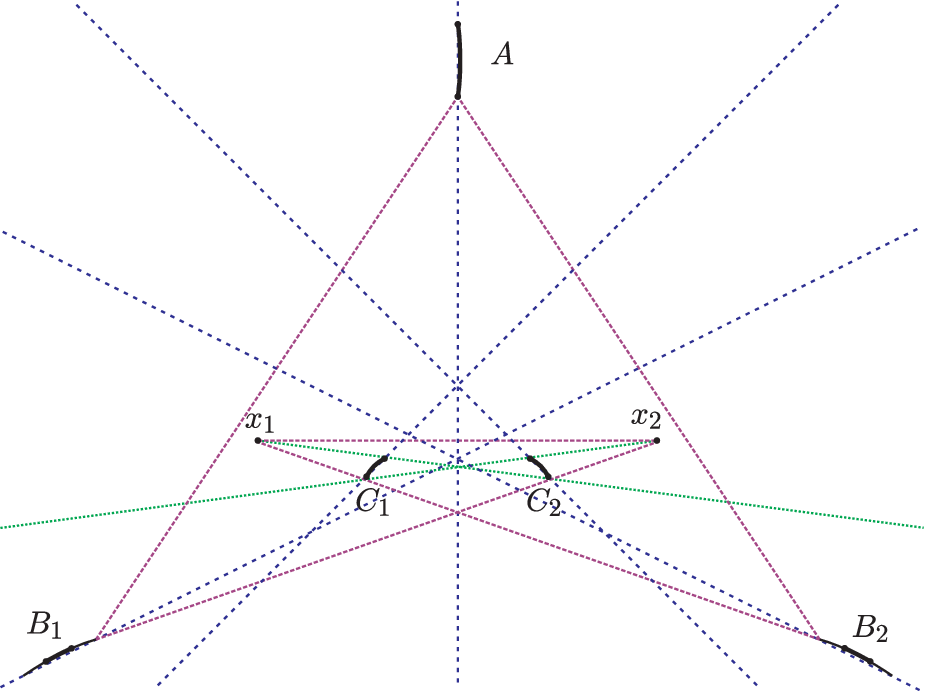}
\caption{The construction when $n=3k+8$. Each of the five thick
dashed lines joins the endpoints of the corresponding arc.}
\label{fig:construction2}
\end{center}
\end{figure}
\begin{proof}
The construction consists of five parts with a linear number of
points and two additional isolated points. Specifically, let
$\varepsilon>0$ be a small constant to be determined. Let $A$ be a
set of $k+2$ points on the arc of circle with endpoints $(0,24)$ and
$(0,20)$ that passes through $(\varepsilon,22)$. Similarly, let
$B_1$ be a set of $k+2-\lfloor k/2 \rfloor$ points on the arc of
circle with endpoints $(-24,-12)$ and $(-20,-10)$ that passes
through $(-22-\varepsilon,-11+2\varepsilon)$; and let $C_1$ be a set
of $\lfloor k/2 \rfloor$ points on the arc of circle with endpoints
$(-5,-1)$ and $(-4,0)$ that passes through
$(-4.5-\varepsilon,-0.5+\varepsilon)$. Let $x_1=(-11,1)$, and set
$B_2$, $C_2$, and $x_2$ as the reflections with respect to the
$y$-axis of $B_1$, $C_1$, and $x_1$, respectively. Choose
$\varepsilon$ small enough so that none of the lines tangent to the
arcs of circle intersects any of the other arcs of circle. This
choice ensures that every line joining points in the same part
separates the other parts as shown in Figure \ref{fig:construction2}
(any $\varepsilon < 0.03$ works). The set $P$ consists of $A\cup B_1
\cup B_2 \cup C_1 \cup C_2 \cup \{x_1,x_2\}$ and it has
$n=(k+2)+2(k+2-\lfloor k/2 \rfloor)+2 \lfloor k/2 \rfloor +2=3k+8$
points.

To determine $\lcr(P)$, we proceed as in Theorem \ref{th:1stConstr};
that is, we find the maximum number of crossings on a segment $xy$
whose endpoints belong to a prescribed pair of parts. Accounting for
symmetry and for the fact that $A$ is on an arc of circle that is
concave to the left, the cases that need to be analyzed are the
following: $(A,A)$, $(A,\{x_1\})$, $(A,B_1)$, $(A,C_1)$, $(\{x_1\},\{x_2\})$,
$(\{x_1\},B_1)$, $(\{x_1\},B_2)$, $(\{x_1\},C_1)$, $(\{x_1\},C_2)$, $(B_1,B_1)$,
$(B_1,B_2)$, $(B_1,C_1)$, $(B_1,C_2)$, $(C_1,C_1)$, and $(C_1,C_2)$;
where each pair $(X,Y)$ indicates that $x \in X$ and $y \in Y$. In
all the following cases with $X \ne Y$, if $p\in \{x,y \}$ belongs
to $S \in \{A,B_1,B_2,C_1,C_2 \}$, then we call \emph{lower} $S$ the
set of points in $S$ that have $y$-coordinate less than $p$, and we
call \emph{upper} $S$ the set of points in $S$ that have
$y$-coordinate greater than $p$.

\begin{itemize}

\item[$(A,A)$] Let $x,y\in A$. Suppose that there are $a$ points of $A$ between $x$ and $y$, and
$k-a$ outside. The only segments crossing $xy$ are those connecting
one of the $a$ inside points in $A$ to points in $C_1 \cup B_1 \cup
\{x_1 \}$ or to points in $A$ outside $xy$. The total is
\[
a\left(\left\lfloor \tfrac{k}{2}
\right\rfloor+\left(k+2-\left\lfloor \tfrac{k}{2} \right\rfloor
\right)+1+(k-a)\right)=a(2k+3-a).
\]
This is maximized when $a =k$ and it gives $k^2+3k$.

\item[$(A,\{x_1\})$] Let $x=x_1$ and $y\in A$. Suppose that upper $A$
and lower $A$ have $a$ and $k+1-a$ points of $A$, respectively. The
only segments crossing $xy$ are those connecting (upper $A$) to
$C_1\cup (\text{lower }A)$, or (lower $A$) to $B_1$. The total is
\begin{multline*}
a\left(\left\lfloor \tfrac{k}{2}
\right\rfloor+(k+1-a)\right)+(k+1-a)\left(k+2-\left\lfloor
\tfrac{k}{2} \right\rfloor \right)\\
=a(2 \left\lfloor \tfrac{k}{2}
\right\rfloor-1-a)+(k+1)(k+2-\left\lfloor \tfrac{k}{2}
\right\rfloor).
\end{multline*}
This is maximized when $a \in \{\lfloor k/2 \rfloor, \lfloor k/2
\rfloor -1 \}$ and it gives $k^2+3k+2-\lfloor k/2 \rfloor
(k+2-\lfloor k/2 \rfloor)<k^2+3k+2$.

\item[$(A,B_1)$] Let $x \in B_1$ and $y \in A$. Suppose that upper $A$, lower $A$, lower
$B_1$, and upper $B_1$ have $a$, $k+1-a$, $b$, and $k+1-\lfloor k/2
\rfloor-b$ points, respectively. The only segments crossing $xy$ are
those connecting (upper $A$) to $C_1\cup (\text{lower }A)\cup
(\text{upper }B_1) \cup \{ x_1 \}$, or (lower $B_1$) to
$(\text{lower }A)\cup C_1 \cup \{x_1\}$. The total is
\begin{multline*}
a[\left\lfloor \tfrac{k}{2} \right\rfloor + (k+1-a)+
(k+1-\left\lfloor \tfrac{k}{2} \right\rfloor -b)
+1]+b[(k+1-a)+\left\lfloor \tfrac{k}{2} \right\rfloor+1]\\
=a(2k+3-2b-a)+b(k+2+\left\lfloor \tfrac{k}{2} \right\rfloor).
\end{multline*}
As a function of $a$, this is maximized when $a \in \{ k+1-b,k+2-b
\}$ and it gives
\[
b(b-k-1+\left\lfloor \tfrac{k}{2} \right\rfloor)+(k^2+3k+2).
\]
This is maximized when $b \in \{ 0, k+1-\lfloor {k}/{2} \rfloor \}$
and it gives $k^2+3k+2$.

\item[$(A,C_1)$] Let $x \in C_1$ and $y \in A$. Suppose that upper $A$, lower $A$, lower
$C_1$, and upper $C_1$ have $a$, $k+1-a$, $c$, and $\lfloor k/2
\rfloor-1-c$ points, respectively. The only segments crossing $xy$
are those connecting (upper $A$) to $(\text{upper }C_1)\cup
(\text{lower }A)$, or (lower $A$) to $B_1\cup (\text{lower }C_1)
\cup \{x_1\}$, or (upper $C_1$) to $B_1 \cup \{ x_1 \}$, or $x_1$ to
$C_2 \cup \{x_2\}$. The total is
\begin{multline*}
a[(\left\lfloor \tfrac{k}{2}
\right\rfloor-1-c)+(k+1-a)]+(k+1-a)[(k+2-\left\lfloor \tfrac{k}{2}
\right\rfloor)+c+1]\\
+(\left\lfloor \tfrac{k}{2} \right\rfloor-1-c)(k+2-\left\lfloor
\tfrac{k}{2} \right\rfloor+1)+(\left\lfloor \tfrac{k}{2}
\right\rfloor+1)\\
=a(2\left\lfloor \tfrac{k}{2} \right\rfloor-3-2c-a)+c(\left\lfloor
\tfrac{k}{2} \right\rfloor-2)+k^2+3k+1+4\left\lfloor \tfrac{k}{2}
\right\rfloor-\left\lfloor \tfrac{k}{2} \right\rfloor^2.
\end{multline*}
As a function of $a$, this is maximized when $a \in \{ \lfloor k/2
\rfloor -c-1,\lfloor k/2 \rfloor -c-2 \}$ and it gives
\[
c(c+1-\left\lfloor \tfrac{k}{2}
\right\rfloor)+(k^2+3k+3+\left\lfloor \tfrac{k}{2} \right\rfloor).
\]
This is maximized when $c \in \{ 0, \lfloor {k}/{2} \rfloor -1 \}$
and it gives $k^2+3k+3+\lfloor k/2 \rfloor$.

\item[$(\{x_1\},\{x_2\})$] The only segments that cross $x_1x_2$ are those
with one endpoint in $A$ and the other in $C_1 \cup C_2$. The total
is $(k+2)(2 \lfloor k/2 \rfloor)\le k^2+2k$.

\item[$(\{x_1\},B_1)$] Let $x=x_1$ and $y \in B_1$. Suppose that lower
$B_1$ has $b$ points and upper $B_1$ has $k+1-\lfloor k/2 \rfloor-b$
points. The only segments crossing $xy$ are those connecting lower
$B_1$ to $C_1$, or upper $B_1$ to $A$. The total is
\[
b \left \lfloor \tfrac{k}{2} \right\rfloor  +(k+1-\left\lfloor
\tfrac{k}{2} \right\rfloor -b)(k+2) =b(\left \lfloor \tfrac{k}{2}
\right\rfloor -k-2)+(k+2)(k+1-\left \lfloor \tfrac{k}{2}
\right\rfloor ).
\]
This is maximized when $b=0$ and it gives $(k+2)(k+1- \lfloor
{k}/{2} \rfloor)< k^2+3k+2$.

\item[$(\{x_1\},B_2)$] Let $x=x_1$ and $y \in B_2$. Suppose that lower
$B_2$ has $b$ points and upper $B_2$ has $k+1-\lfloor k/2 \rfloor-b$
points. The only segments crossing $xy$ are those connecting $B_1$
to $C_1 \cup C_2 \cup (\text{upper }B_2)\cup \{x_2 \}$, or
$(\text{lower }B_2)$ to $C_1 \cup (\text{upper }B_2)$. The total is
\begin{multline*}
(k+2- \left \lfloor \tfrac{k}{2} \right \rfloor)\left[\left\lfloor
\tfrac{k}{2} \right\rfloor + \left\lfloor \tfrac{k}{2}
\right\rfloor+(k+1-\left\lfloor \tfrac{k}{2}
\right\rfloor-b)+1\right]+b \left[\left\lfloor \tfrac{k}{2}
\right\rfloor+(k+1-\left\lfloor
\tfrac{k}{2} \right\rfloor -b) \right]\\
=b(\left\lfloor \tfrac{k}{2} \right\rfloor
-1-b)+(k^2+4k+4)-\left\lfloor \tfrac{k}{2} \right\rfloor^2\\
<b(\left\lfloor \tfrac{k}{2} \right\rfloor
-b)+(k^2+4k+4)-\left\lfloor \tfrac{k}{2} \right\rfloor^2 .
\end{multline*}
This is maximized (for $b$ real) when $b=\lfloor k/2 \rfloor/2$ and
it gives $k^2+4k+4-\tfrac{3}{4} \lfloor {k}/{2} \rfloor^2$, which is
at most $k^2+4k+3-\lfloor {k}/{2} \rfloor$ for $k \ge 4$.

\item[$(\{x_1\},C_1)$] Let $x=x_1$ and $y \in C_1$. Suppose that lower
$C_1$ has $c$ points and upper $C_1$ has $\lfloor k/2 \rfloor-1-c$
points. The only segments crossing $xy$ are those connecting upper
$C_1$ to $B_1$, or lower $C_1$ to $A$. The total is
\[
(\left\lfloor \tfrac{k}{2} \right\rfloor-1-c)(k+2-\left\lfloor
\tfrac{k}{2} \right\rfloor)+c(k+2)=(\left\lfloor \tfrac{k}{2}
\right\rfloor-1)(k+2-\left\lfloor \tfrac{k}{2}
\right\rfloor)+c\left\lfloor \tfrac{k}{2} \right\rfloor.
\]
This is maximized when $c=\lfloor {k}/{2} \rfloor-1$ and it gives
$(k+2)(\lfloor {k}/{2} \rfloor-1) \le k^2/2-2$.

\item[$(\{x_1\},C_2)$] Let $x=x_1$ and $y \in C_2$. Suppose that lower
$C_2$ has $c$ points and upper $C_2$ has $\lfloor k/2 \rfloor-1-c$
points. Note that there is a line through $ x_1 $ that separates $
C_1 $ and $ C_2 $. Then the only segments crossing $xy$ are those
connecting $C_1$ to $A \cup \{x_2 \}$ or (upper $C_2$) to $B_1 \cup
C_1 \cup (\text{lower }C_2)$. The total is
\begin{multline*}
\left\lfloor \tfrac{k}{2} \right\rfloor
\left[(k+2)+1\right]+(\left\lfloor \tfrac{k}{2}
\right\rfloor-1-c)\left[(k+2-\left\lfloor \tfrac{k}{2}
\right\rfloor)+\left\lfloor \tfrac{k}{2}
\right\rfloor+c\right]\\
=c(\left\lfloor \tfrac{k}{2}
\right\rfloor-k-3-c)+(k+2)(2\left\lfloor \tfrac{k}{2}
\right\rfloor-1)+\left\lfloor \tfrac{k}{2} \right\rfloor.
\end{multline*}
This is maximized when $c=0$ and it gives $(k+2)(2\lfloor {k}/{2}
\rfloor-1)+\lfloor {k}/{2} \rfloor \le k^2+3k/2-2$.

\item[$(B_1,B_1)$] Let $xy$ be a segment with both points in $B_1$.
Suppose that there are $b$ points of $B_1$ between $x$ and $y$, and
$k-\lfloor k/2 \rfloor -b$ outside. The only segments crossing $xy$
are those connecting one of the $b$ inside points in $B_1$ to points
in $B_2 \cup C_2 \cup \{x_2 \}$ or to points in $B_1$ outside $xy$.
The total is
\[
b\left[(k+2-\left\lfloor \tfrac{k}{2} \right\rfloor)+\left\lfloor
\tfrac{k}{2} \right\rfloor+1+\left(k-\left\lfloor \tfrac{k}{2}
\right\rfloor-b \right)\right]=b[2k+3-\left\lfloor \tfrac{k}{2}
\right\rfloor-b].
\]
This is maximized  when $b= k-\lfloor {k}/{2}
\rfloor$ and it gives $( k-\lfloor {k}/{2}
\rfloor)(k+3)\leq k^2+3k$ for $k \ge 4$.

\item[$(B_1,B_2)$] Let $x \in B_1$ and $y \in B_2$. Suppose that lower $B_1$, upper $B_1$,
lower $B_2$, and upper $B_2$ have $b_1$, $k+1-\lfloor k/2
\rfloor-b_1$, $b_2$, and $k+1-\lfloor k/2 \rfloor-b_2$ points,
respectively. The only segments crossing $xy$ are those connecting
(lower $B_1$) to $(\text{upper }B_2)\cup (\text{upper }B_1) \cup C_2
\cup \{x_2 \}$, or (lower $B_2$) to $(\text{upper }B_1)\cup
(\text{upper }B_2) \cup C_1 \cup \{x_1\}$. The total is
\begin{multline*}
b_1[(k+1-\left\lfloor \tfrac{k}{2}
\right\rfloor-b_2)+(k+1-\left\lfloor \tfrac{k}{2}
\right\rfloor-b_1)+\left\lfloor \tfrac{k}{2}
\right\rfloor+1]\\
+b_2[(k+1-\left\lfloor \tfrac{k}{2}
\right\rfloor-b_1)+(k+1-\left\lfloor \tfrac{k}{2}
\right\rfloor-b_2)+\left\lfloor \tfrac{k}{2}
\right\rfloor+1]\\
=(b_1+b_2)(2k+3-\left\lfloor \tfrac{k}{2}
\right\rfloor-(b_1+b_2))\\
<(b_1+b_2)(2k+3-(b_1+b_2)).
\end{multline*}
As a function of $b_1+b_2$, this is maximized when $b_1+b_2 \in
\{k+1,k+2 \}$ and it gives $k^2+3k+2$.

\item[$(B_1,C_1)$] Let $x \in B_1$ and $y \in C_1$.
Suppose that lower $B_1$, upper $B_1$, lower $C_1$, and upper $C_1$
have $b$, $k+1-\lfloor k/2 \rfloor-b$, $\lfloor k/2 \rfloor-1-c$,
and $c$ points, respectively. The only segments crossing $xy$ are
those connecting (lower $C_1$) to $(\text{lower }B_1)\cup A \cup
\{x_1 \}$, or (upper $B_1$) to $(\text{upper }C_1)\cup A \cup
\{x_1\}$, or $x_1$ to $B_2$. The total is
\begin{multline*}
(\left\lfloor \tfrac{k}{2}
\right\rfloor-1-c)[b+(k+2)+1]+(k+1-\left\lfloor \tfrac{k}{2}
\right\rfloor-b)[c+(k+2)+1]+(k+2-\left\lfloor \tfrac{k}{2}
\right\rfloor)\\
=-b(2c+4+k-\left\lfloor \tfrac{k}{2} \right\rfloor)-c(2+\left\lfloor
\tfrac{k}{2} \right\rfloor)+k^2+4k+2-\left\lfloor \tfrac{k}{2}
\right\rfloor.
\end{multline*}
As a function of $b$ and $c$, this is maximized when $b=c=0$ and it
gives $k^2+4k+2-\lfloor k/2 \rfloor$.

\item[$(B_1,C_2)$] Let $x \in B_1$ and $y \in C_2$. Suppose that lower $B_1$, upper $B_1$, lower
$C_2$, and upper $C_2$ have $b$, $k+1-\lfloor k/2 \rfloor-b$,
$\lfloor k/2 \rfloor-1-c$, and $c$ points, respectively. The only
segments crossing $xy$ are those connecting (lower $B_1$) to
$(\text{upper }C_2) \cup (\text{upper }B_1)$, or (upper $B_1$) to
$B_2 \cup (\text{lower }C_2)\cup \{x_2 \}$, or $B_2 \cup
(\text{lower }C_2)$ to $C_1\cup \{x_1\}$, or (upper $C_2$) to (lower
$C_2$). The total is
\begin{multline*}
b[c+(k+1-\left\lfloor \tfrac{k}{2}
\right\rfloor-b)]+(k+1-\left\lfloor \tfrac{k}{2}
\right\rfloor-b)[(k+2-\left\lfloor \tfrac{k}{2}
\right\rfloor)+(\left\lfloor \tfrac{k}{2}
\right\rfloor-1-c)+1]\\
+[(k+2-\left\lfloor \tfrac{k}{2}
\right\rfloor)+(\left\lfloor \tfrac{k}{2}
\right\rfloor-1-c)](\left\lfloor \tfrac{k}{2} \right\rfloor+1)
+c(\left\lfloor \tfrac{k}{2} \right\rfloor-1-c)\\
=-(b-c)^2-b(\left\lfloor \tfrac{k}{2}
\right\rfloor+1)-c(k-\left\lfloor \tfrac{k}{2}
\right\rfloor+3)+k^2+4k+3-\left\lfloor \tfrac{k}{2} \right\rfloor.
\end{multline*}
As a function of $b$ and $c$, this is maximized when $b=c=0$ and it
gives $k^2+4k+3-\lfloor k/2 \rfloor$.

\item[$(C_1,C_1)$] Let $xy$ be a segment with both points in $C_1$.
Suppose that there are $c$ points of $C_1$ between $x$ and $y$, and
$\lfloor k/2 \rfloor -2-c$ outside. The only segments crossing $xy$
are those connecting one of the $c$ inside points in $C_1$ to points
in $B_2 \cup C_2 \cup \{x_2 \}$ or to points in $C_1$ outside $xy$.
The total is
\[
c\left[(k+2-\left\lfloor \tfrac{k}{2} \right\rfloor)+\left\lfloor
\tfrac{k}{2} \right\rfloor+1+\left(\left\lfloor \tfrac{k}{2}
\right\rfloor-2-c \right)\right]=c[k+\left\lfloor \tfrac{k}{2}
\right\rfloor+1-c].
\]
This is maximized when $c= \lfloor {k}/{2} \rfloor-2$ and it gives
$(k+3)(\lfloor {k}/{2} \rfloor -2)<k^2/2$.

\item[$(C_1,C_2)$] Let $x \in C_1$ and $y \in C_2$. Suppose that lower $C_1$, upper $C_1$,
lower $C_2$, and upper $C_2$ have $\lfloor k/2 \rfloor-1-c_1$,
$c_1$, $\lfloor k/2 \rfloor-1-c_2$, and $c_2$ points, respectively.
The only segments crossing $xy$ are those connecting (upper $C_1$)
to $B_2 \cup (\text{lower }C_2) \cup (\text{lower }C_1)$, or (upper
$C_2$) to $B_1 \cup (\text{lower }C_1 \cup (\text{lower }C_2)$, or
$x_1$ to (lower $C_2$), or $x_2$ to (lower $C_1$). The total is
\begin{multline*}
c_1[(k+2-\left\lfloor \tfrac{k}{2} \right\rfloor)+(\left\lfloor
\tfrac{k}{2} \right\rfloor-1-c_2)+(\left\lfloor \tfrac{k}{2}
\right\rfloor-1-c_1)]\\
+c_2[(k+2-\left\lfloor \tfrac{k}{2} \right\rfloor)+(\left\lfloor
\tfrac{k}{2} \right\rfloor-1-c_1)+(\left\lfloor \tfrac{k}{2}
\right\rfloor-1-c_2)]\\
+(\left\lfloor \tfrac{k}{2} \right\rfloor-1-c_1)+(\left\lfloor
\tfrac{k}{2}
\right\rfloor-1-c_2)\\
=(c_1+c_2)(k+\left\lfloor \tfrac{k}{2}
\right\rfloor-1-c_1-c_2)+2\left\lfloor \tfrac{k}{2}
\right\rfloor-2\\
<(c_1+c_2)[2k-(c_1+c_2)]+k-2.
\end{multline*}
As a function of $c_1+c_2$, this is maximized when $c_1+c_2=k$ and
it gives $k^2+k-2$.
\end{itemize}
In all cases the maximum possible number of crossings is at most
$k^2+4k+3-\lfloor k/2 \rfloor$. Equality occurs in the case $(B_1,
C_2)$ when $b=c=0$, and for $k$ even in the case $(A,C_1)$ when
$(a,c)\in \{(0,k/2-1),(k/2-1,0),(k/2-2,0) \}$; together with the
symmetric cases $(B_2,C_1)$ and $(A,C_2)$.
\end{proof}

\section{Proof of Theorem \ref{th:Main}}\label{sec:MainTheorem}

If $n\le 2$, then the result is
trivially true because $\lcr(K_n)=0$. Suppose $n\ge 3$. The required
formula as lower bound follows directly by Theorem
\ref{th:lowerbound}. If $n\equiv 0,1 \pmod 3$, then the matching
upper bound follows from Theorem \ref{th:1stConstr}. If $n\equiv 2
\pmod 3$ and $n\ge 20$, then the matching upper bound follows from
Theorem \ref{th:secondconstr}. The upper bounds $\lcr(K_5)\le 1$ and
$\lcr(K_8)\le 4$ follow from Theorem \ref{th:1stConstr}. The
drawings shown in Figure \ref{fig:constr11-14-17} show that
$\lcr(K_{11})\le 8$, $\lcr(K_{14})\le 15$, and $\lcr(K_{17})\le 23$.
Finally, for $n=8$, we used the data base by Aichholzer et al. \cite{aak1,oswinweb2} to calculate the local crossing number of each of the 3315 order types of 8 points. Our calculations verified that $\lcr(K_8) \ge 4$ for all of them, with equality achieved by 39 order types. For $n=14$, Aichholzer \cite{A} extended his data base from $n=11$ to $n=14$ for this specific problem. He confirmed our results for $n<14$ and verified our earlier conjecture \cite{AF16} that there are no geometric sets of 14 points where every edge is crossed at most 14 times. Thus $\lcr(K_{14})=15$.
\begin{figure}[htbp]
\begin{center}
\includegraphics[scale=1.2]{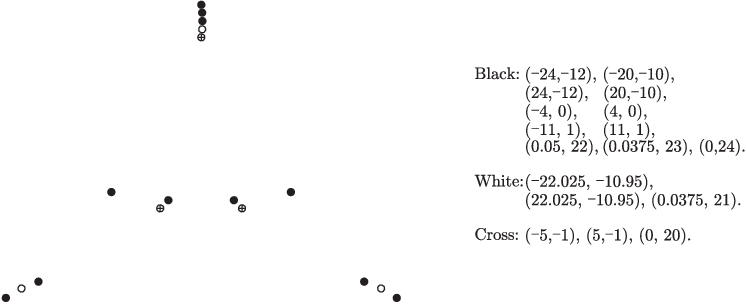}
\caption{The construction for $n\in \{ 11, 14, 17 \}$. The 11-point set consists of the black points, the 14-point set consists of the black or white points, and the 17-point set consists of all of the points shown.} \label{fig:constr11-14-17}
\end{center}
\end{figure}
\section{Final Remarks}\label{sec:finalremarks} The construction in Theorem
\ref{fig:construction} shows that Lemma \ref{lem:separation} is best
possible. That is, the point-sets $P$ constructed in Theorem
\ref{fig:construction} verify that every path of length 2 whose
endpoints are vertices of the convex hull of $P$ separates the rest
of the set into parts that differ by no less than $(n-3)/3$.

Another consequence of Theorem \ref{th:Main} is that for $n \not\in
\{ 8,14 \}$, there is always an edge achieving $\lcr(K_n)$ with one
endpoint that is a vertex of the convex hull. The same is true for
$n=8$ by inspecting all order types that achieve $\lcr(K_8)=4$.
However, it is not true that \emph{all} of the maximal edges on the
optimal examples have an endpoint that is a vertex of the convex
hull. Another property shared by all the optimal constructions we
know is that their convex hull is a triangle. Is it true that all
the optimal point sets have triangular convex hulls? A related
problem is to determine the maximum number of edges that can achieve
$\lcr(K_n)$ crossings in an optimal point set.

\paragraph{Acknowledgments.} We warmly thank the referees for their comments and suggestions,
which certainly improved the presentation of this paper. We also
thank Oswin Aichholzer for listening to our results, settling the
case $ n=14 $, and verifying Theorem \ref{th:Main} for small values
of $ n $. S.~Fern\'andez-Merchant's research was supported by the
NSF grant DMS-1400653.

\end{document}